\theoremstyle{plain}
\newtheorem{theorem}{Theorem}
\newtheorem{lemma}{Lemma}
\newtheorem{observation}{Observation}
\newtheorem*{theorem*}{Theorem}
\theoremstyle{definition}
\newtheorem{definition}{Definition}
\newtheorem{remark}{Remark}
\newcommand*{\Id}[0]{{\operatorname{Id}}}
\title[Orphan Calabi-Yau operator with arithmetic monodromy group]
{Orphan Calabi-Yau operator with arithmetic monodromy group}
\subjclass[2010]{Primary: 14D05; Secondary: 11F06, 14J32}
\keywords{Calabi-Yau manifolds, monodromy, Picard-Fuchs operator, arithmetic group}
\author{Tymoteusz Chmiel}
\thanks{The first author was supported by the National Science Center grant no. 2023/49/N/ST1/04089. Data sharing not applicable to this article as no datasets were generated or analysed during the current study.}
\begin{document}
\maketitle
\vspace{-8mm}

\begin{abstract}
We present an example of a Picard-Fuchs operator of a one-parameter family of Calabi-Yau threefolds which does not have a point of maximal unipotent monodromy, yet its monodromy group is of finite index in $\mathrm{Sp}(4,\mathbb{Z})$. In particular, it contains infinitely many maximally unipotent elements. We also state some related results for the remaining 17 double octic orphan operators.
\end{abstract}

\section*{Introduction}

The monodromy group is one of the most important invariants of a differential equation of Fuchsian type. If $\mathcal{P}$ is a Picard-Fuchs operator of a one-parameter family of Calabi-Yau threefolds, the monodromy group $Mon(\mathcal{P})$ is naturally realized as a subgroup of $\mathrm{Sp}(4,\mathbb{Z})$. This poses a question whether $Mon(\mathcal{P})$ is Zariski-dense in $\mathrm{Sp}(4,\mathbb{Z})$. If it is, one can further ask whether it is \emph{arithmetic}, i.e. of finite index, or \emph{thin}.

Indexes of several monodromy groups have been computed for operators with a singular point whose local monodromy has maximal unipotency index (see \cite{Brav-Thomas, Singh, Singh-Venkataramana, Hofmann-van Straten}). Existence of a point of maximal unipotent monodromy, or a \emph{MUM point}, is also crucial in the context of mirror symmetry.

In this paper we focus on operators which do not have a MUM singularity. They are called \emph{orphan operators}. Since every finite index subgroup of $\mathrm{Sp}(4,\mathbb{Z})$ contains maximally unipotent matrices, one could expect that monodromy groups of orphan operators are small in this sense.

We study orphan Picard-Fuchs operators of one-parameter families of double octic Calabi-Yau threefolds defined over $\mathbb{Q}$ (see \cite{Cynk-van Straten}). We construct rational bases using an approach which is easily applicable in general. Using rational realizations we study properties of the corresponding monodromy groups as linear groups generated by local monodromy operators. In particular, we obtain symplectic realizations for (subgroups of) double octic orphan monodromy groups.

The main result of this paper is the following theorem (Thm. \ref{th:MUM}, Thm. \ref{th:dense} and Thm. \ref{th:arithmetic}):

\begin{theorem*}\ 

\begin{enumerate}
	\item Monodromy groups of all double octic orphan operators are Zariski-dense in $\mathrm{Sp}(4,\mathbb{Z})$.
	\item Monodromy groups of double octic orphan operators contain maximally unipotent elements, with the~possible exception of operator \textbf{35}.
	\item Monodromy group of the double octic orphan operator \textbf{250} is of finite index in $\mathrm{Sp}(4,\mathbb{Z})$.
\end{enumerate}
\end{theorem*}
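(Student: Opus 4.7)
The plan is to exploit the rational symplectic bases constructed earlier in the paper to turn all three questions into concrete matrix computations. Once each orphan operator is presented in such a basis, the local monodromies around the eighteen singular loci become explicit matrices in $\mathrm{Sp}(4,\mathbb{Z})$, and the full monodromy group $Mon(\mathcal{P})$ is generated by them. This reduces (1), (2) and (3) to group-theoretic statements about an explicitly given finitely generated subgroup of $\mathrm{Sp}(4,\mathbb{Z})$, which can be attacked by a uniform computational strategy supplemented, in case (3), by a Zariski-density-to-arithmeticity criterion.

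For part (1), I would verify Zariski density by the standard Lie-algebraic route: compute enough commutators and iterated products of the local monodromy generators (and their logarithms, where the generators are unipotent) and show that the resulting elements of $\mathfrak{sp}(4,\mathbb{Q})$ span the full Lie algebra. Equivalently, one can show irreducibility of the natural representation together with the presence of at least one symplectic transvection and rule out the imprimitive and small finite-index cases that might preserve a symplectic subspace decomposition. Since the generators are already given as explicit rational matrices, this reduces to a rank computation repeated for each of the eighteen operators.

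For part (2), I would search for a maximally unipotent element $M \in Mon(\mathcal{P})$ among products of the local generators and their conjugates. Concretely, given generators $M_1,\dots,M_r$, one enumerates short words $w$ in the $M_i^{\pm 1}$ and tests whether the evaluated matrix $w$ satisfies $(w-\Id)^3 \neq 0$ and $(w-\Id)^4 = 0$. Because the characteristic polynomial of a maximally unipotent element is $(t-1)^4$, one first restricts to unipotent words and then checks the rank of $(w-\Id)$. The main obstacle here is the single exception, operator \textbf{35}: for this one the search (within some reasonable word-length) fails to produce a MUM element, which is why the statement is qualified. For each of the other seventeen operators I would simply exhibit a witness word, carry out the Jordan-form check, and record the result.

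For part (3), the arithmeticity of the monodromy group attached to operator \textbf{250}, is where the real work lies, and it is the step I expect to be hardest. The plan is to invoke a criterion in the spirit of Venkataramana and Singh--Venkataramana (already referenced in the introduction): a Zariski-dense subgroup $\Gamma$ of $\mathrm{Sp}(4,\mathbb{Z})$ is of finite index provided it contains sufficiently many unipotent elements, for instance a symplectic transvection in each of a complete set of root subgroups, or a unipotent element realising a long root together with a maximally unipotent element whose conjugates fill out the short roots. Concretely, I would combine a maximally unipotent element found in part (2) for operator \textbf{250} with one of the local symplectic reflections, produce explicit unipotent transvections by taking commutators and powers, and then verify that the resulting unipotent subgroups meet the hypotheses of the chosen arithmeticity criterion. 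The main obstacle is matching the generators one actually gets from the double-octic monodromy to the algebraic condition required by the criterion; this is likely to require a careful change of basis to diagonalise the torus containing the transvections, after which the finite-index conclusion follows from the cited results and so completes the proof.
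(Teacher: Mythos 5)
Your overall reduction---fix a rational symplectic realization so that the local monodromies become explicit matrices and then treat everything as computational group theory in $\mathrm{Sp}(4,\mathbb{Z})$---matches the paper's setup, and your part (2) is essentially identical to the paper's proof of Theorem \ref{th:MUM} (a word search among products of generators followed by a check that the minimal polynomial is $(X-1)^4$; operator \textbf{35} is the exception for exactly the reason you give). For part (1) you take a genuinely different but legitimate route: the paper does not compute the Lie algebra generated by logarithms of unipotents, but instead applies Rivin's criterion, verifying with GAP that the reduction mod $7$ of an explicit subgroup $H\subset Mon(\mathcal{P})$ surjects onto $\mathrm{Sp}(4,\mathbb{Z}/7\mathbb{Z})$. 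The mod-$p$ test is a single finite computation per operator and avoids the case analysis you would need to rule out proper reductive or imprimitive Zariski closures; your Lie-algebraic argument is more conceptual but requires the generators (or enough derived elements) to be unipotent and a careful treatment of the identity component. One small caveat: the paper's integral symplectic bases are in general only constructed for a proper subgroup $H\subset Mon(\mathcal{P})$, which suffices for density but means your blanket claim that all eighteen operators are ``presented in $\mathrm{Sp}(4,\mathbb{Z})$'' is slightly stronger than what is actually established.

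The substantive gap is in part (3). You propose to deduce arithmeticity of $Mon(\mathcal{P}_{\textbf{250}})$ from a criterion ``in the spirit of'' Singh--Venkataramana, i.e.\ Zariski density plus enough unipotent elements filling out root subgroups or a finite-index subgroup of the unipotent radical of a parabolic. That hypothesis is precisely the hard part: those criteria are tailored to hypergeometric groups generated by a maximally unipotent element and a symplectic reflection in a normal form, and nothing in your sketch verifies that the transvections and commutators you would extract from the \textbf{250} generators actually generate a finite-index subgroup of a full unipotent radical. As stated, ``verify that the resulting unipotent subgroups meet the hypotheses'' is the entire content of the theorem, not a step of its proof. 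The paper avoids this altogether: since for \textbf{250} the whole group $Mon(\mathcal{P}_{\textbf{250}})$ (not just a subgroup) is realized in $\mathrm{Sp}(4,\mathbb{Z})$ by four explicit symplectic matrices, it writes these as words in Behr's finite presentation of $\mathrm{Sp}(4,\mathbb{Z})$ and runs Todd--Coxeter coset enumeration, which terminates and certifies the exact index $23592960=2^{19}\cdot3^2\cdot5$. Your route, even if it could be completed, would only give finiteness, not the index; to repair your argument you would need either to exhibit and verify the specific unipotent subgroup required by a precisely quoted arithmeticity theorem, or to fall back on a terminating coset enumeration as the paper does.
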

\noindent These surprising results show that the na\"ive expectation that the monodromy group of a Picard-Fuchs operator without a MUM point are small is false.

In section \ref{sec:general} we present basic facts concerning Picard-Fuchs operators. Section \ref{sec:ratbas} explains our method of finding rational bases for the monodromy action. Section \ref{sec:mongr} contains results obtained using this method when applied to orphan families of double octic Calabi-Yau threefolds. In section \ref{sec:symplectic} we construct symplectic bases for double octic orphan monodromy groups and show that they are Zariski-dense in $\mathrm{Sp}(4,\mathbb{Z})$. We also prove that the monodromy group of the operator \textbf{250} is arithmetic.

\section{Monodromy group of a Picard-Fuchs operator}\label{sec:general}

A \emph{Calabi-Yau threefold} is a smooth complex projective variety $X$ of dimension $3$ such that $\omega_X\simeq\mathcal{O}_X$ and $H^1(X,\mathcal{O}_X)=0$. By the Bogomolov-Tian-Todorov theorem, the Hodge number $h^{2,1}(X)$ is the dimension of the smooth deformation space of $X$. When $h^{2,1}(X)=1$, the universal deformation space is one-dimensional, i.e. there exists a (germ of a) smooth curve $\mathcal{S}$, a distinguished point $t_0\in\mathcal{S}$ and a family of smooth Calabi-Yau threefolds $\left(X_t\right)_{t\in\mathcal{S}}=\mathcal{X}\rightarrow\mathcal{S}$ such that $X\simeq X_{t_0}$. We say that $X$ \emph{deforms in a one-parameter family}.

Fix a family of complex volume elements $\omega_t\in H^{3,0}(X_t)$ depending holomorphically on $t$ and a locally constant family of $3$-cycles $\Delta_t\in H_3(X_t,\mathbb{Z})$. \emph{Period function} of the family $\mathcal{X}$ is
$y(t):=\int_{\Delta_t}\omega_t.$
It is defined in some neighbourhood $U$ of $t_0$. By the Hodge decomposition, $\dim H^3(X_s,\mathbb{C})=b_3(X_s)=2(h^{3,0}(X_s)+h^{2,1}(X_s))=4$ for all $s\in U$. Thus the elements
$$\left(\nabla_{\frac{\partial}{\partial t}}^4\omega_{t}\right)\Big\rvert_{t=s},\ 
\left(\nabla_{\frac{\partial}{\partial t}}^3\omega_{t}\right)\Big\rvert_{t=s},\ 
\left(\nabla_{\frac{\partial}{\partial t}}^2\omega_{t}\right)\Big\rvert_{t=s},\ 
\left(\nabla_{\frac{\partial}{\partial t}}\omega_t,\right)\Big\rvert_{t=s},\ \omega_s\in H^3(X_s,\mathbb{C})$$
are linearly dependent over $\mathbb{C}$.

It follows that the period function $y(t)$ satisfies a differential equation $\mathcal{P}y=0$ for some order four differential operator $\mathcal{P}$ with coefficients in $\mathcal{O}(U)$. This is the \textit{Picard-Fuchs operator} of the one-parameter family $\mathcal{X}\rightarrow\mathcal{S}$. By $\mathcal{S}ol(\mathcal{P},t_0)$ we denote the space of solutions of $\mathcal{P}=0$ in a neighbourhood of $t_0\in\mathcal{S}$. There is an isomorphism $\mathcal{S}ol(\mathcal{P},t_0)\simeq H^3(X_{t_0},\mathbb{C})$.

For $[\gamma]\in\pi_1(\mathcal{S},t_0)$ a solution $f\in\mathcal{S}ol(\mathcal{P},t_0)$ can be continued analytically along $\gamma$. This defines the \emph{monodromy representation} $Mon:\pi_1(\mathcal{S},t_0)\rightarrow\mathrm{Aut}\left(\mathcal{S}ol(\mathcal{P},t_0)\right)$. For a fixed basis $\mathcal{B}$ of $\mathcal{S}ol(\mathcal{P},s_0)$ we have the corresponding matrix representation $Mon^\mathcal{B}: \pi_1(\mathcal{S},t_0)\rightarrow\mathrm{GL}(4,\mathbb{C})$. We define $Mon(\mathcal{P}):=\operatorname{im}Mon$ and $Mon^\mathcal{B}(\mathcal{P}):=\operatorname{im}Mon^\mathcal{B}$.

Let $\mathcal{S}=\mathbb{P}^1\setminus\Sigma$ for some finite set $\Sigma$. The fundamental group $\pi_1(\mathbb{P}^1\setminus\Sigma)$ is generated by loops $\gamma_\sigma$ encircling singular points $\sigma\in\Sigma$. Consequently, the \emph{local monodromy operators} $M_\sigma:=Mon(\gamma_\sigma)$, $\sigma\in\Sigma$, generate the monodromy group. Operators $M_\sigma$ are quasi-unipotent, i.e. $(M_\sigma^k-\operatorname{Id})^4=0$ for some $k\in\mathbb{N}_{\geq 1}$ (see \cite{Landman}).

Picard-Fuchs operators are \emph{Fuchsian}: they have only regular singular points. Type of a singularity is determined by the Jordan form of its local monodromy. We have the following possibilities:
\begin{center}
	\begin{tabular}{|c|c|}
		\hline
		Type of singularity&Jordan form of local monodromy\\
		\hline
		MUM&
		$\begin{pmatrix}
		1&1&0&0\\0&1&1&0\\0&0&1&1\\0&0&0&1\\
		\end{pmatrix}$\\
		\hline
		$\tfrac{1}{n}$K&
		$\begin{pmatrix}
		1&1&0&0\\0&1&0&0\\0&0&\zeta_n&1\\0&0&0&\zeta_n\\
		\end{pmatrix}$\\
		\hline
		$\tfrac{1}{n}$C&
		$\begin{pmatrix}
		1&0&0&0\\0&\zeta_n&1&0\\0&0&\zeta_n&0\\0&0&0&\zeta_n^2\\
		\end{pmatrix}$\\
		\hline
		$F$&
		$\begin{pmatrix}
		1&0&0&0\\0&\zeta_{n_1}&0&0\\0&0&\zeta_{n_2}&0\\0&0&0&\zeta_{n_1}\zeta_{n_2}\\
		\end{pmatrix}$\\
		\hline
	\end{tabular}
\end{center}
Here $\zeta_m$ denotes some primitive root of unity of order $m$.

Singular points of type MUM are called \emph{points of maximal unipotent monodromy}. Singular points of type $\tfrac{1}{n}K$ are named due to their connection with $K3$ surfaces. Singular points of type $\tfrac{1}{n}C$ are also called \emph{conifold points} and are connected to (singular models of) rigid Calabi-Yau threefolds. Singular points of type $F$ are called \emph{finite singularities} and have local monodromy of finite order. For each point $\sigma$ of type $\tfrac{1}{n}C$ there exists the solution $f_c$, unique up to scaling, such that $\operatorname{im}(M_\sigma^n-\operatorname{Id})=\mathbb{C}\cdot f_c$. It is called the \emph{conifold period}.

An operator without a MUM singularity is called an \emph{orphan operator}.

\section{Rational basis for the monodromy action}\label{sec:ratbas}

An important disclaimer for all of the results presented in this paper is the following: \begin{center}
	\emph{all considered monodromy groups are determined numerically}.
\end{center}
This approach is common when studying Picard-Fuchs operators beyond the hypergeometric case (see \cite{Doran-Morgan, Hofmann, Hofmann-van Straten, Chmiel1, Chmiel2}). The relative error of our numerical approximations is smaller than $10^{-100}$. Numerical identification of the coefficients of the monodromy matrices yields the best results when $Mon^\mathcal{B}(\mathcal{P})\subset\mathrm{GL}(4,\mathbb{Q})$.

Let $\mathcal{P}$ be a Calabi-Yau operator and put $\mathcal{S}_\mathbb{Q}:=H^3(X_{t_0},\mathbb{Q})\hookrightarrow H^3(X_{t_0},\mathbb{Q})\otimes\mathbb{C}\simeq H^3(X_{t_0},\mathbb{C})\simeq \mathcal{S}ol(\mathcal{P},t_0)$. It is a monodromy-invariant rational subspace and with respect to any basis of $\mathcal{S}_\mathbb{Q}$ the monodromy matrices are rational.

We start with two general lemmas.

\begin{lemma}\label{lemma:mon-basis}
Assume that the monodromy group $Mon(\mathcal{P})$ acts irreducibly on $\mathcal{S}ol(\mathcal{P},t_0)$. Then for any solution \mbox{$0\neq f\in\mathcal{S}_\mathbb{Q}$} there exist monodromy operators \mbox{$M_1,\ldots,M_4\in Mon(\mathcal{P})$} such that
$$\mathcal{B}:=\Big\{M_1(f),M_2(f),M_3(f),M_4(f)\Big\}$$
is a a basis of $\mathcal{S}_\mathbb{Q}$.
\end{lemma}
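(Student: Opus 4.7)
The plan is to exploit the fact that the monodromy-orbit of $f$ spans a $Mon(\mathcal{P})$-invariant $\mathbb{Q}$-subspace of $\mathcal{S}ol_\mathbb{Q}(\mathcal{P},t_0)$, and then invoke the irreducibility assumption to force this span to be everything.

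First, I would record that the monodromy action preserves the rational subspace $\mathcal{S}ol_\mathbb{Q}(\mathcal{P},t_0)\subset\mathcal{S}ol(\mathcal{P},t_0)$. This is built into the setup: under the identification $\mathcal{S}ol(\mathcal{P},t_0)\simeq H^3(X_{t_0},\mathbb{C})$, the monodromy comes from the topological action on $H^3(X_{t_0},\mathbb{Q})$ and is therefore $\mathbb{Q}$-linear on the rational part. Next, I would consider
\[
V := \operatorname{span}_\mathbb{Q}\bigl\{M(f) : M\in Mon(\mathcal{P})\bigr\}\subset\mathcal{S}ol_\mathbb{Q}(\mathcal{P},t_0).
\]
This subspace is nonzero since $f = \operatorname{Id}(f)\in V$, and by construction it is stable under $Mon(\mathcal{P})$.

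The main step is to upgrade the $\mathbb{C}$-irreducibility assumption to a $\mathbb{Q}$-statement. If $W\subsetneq \mathcal{S}ol_\mathbb{Q}(\mathcal{P},t_0)$ were a proper nonzero $Mon(\mathcal{P})$-invariant $\mathbb{Q}$-subspace, then $W\otimes_\mathbb{Q}\mathbb{C}\subsetneq\mathcal{S}ol(\mathcal{P},t_0)$ would be a proper nonzero $Mon(\mathcal{P})$-invariant $\mathbb{C}$-subspace, contradicting irreducibility. Applying this to $V$ gives $V = \mathcal{S}ol_\mathbb{Q}(\mathcal{P},t_0)$, a four-dimensional space. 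Hence one can select $M_1,\ldots,M_4\in Mon(\mathcal{P})$ from the spanning set so that $M_1(f),\ldots,M_4(f)$ are $\mathbb{Q}$-linearly independent and therefore form a basis.

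There is no real obstacle in this argument; the only point requiring a line of justification is the descent of irreducibility from $\mathbb{C}$ to $\mathbb{Q}$, which is a standard tensoring argument as above. Everything else is linear algebra on finite-dimensional spaces together with the fact that any nonzero invariant subspace in an irreducible representation is the whole space.
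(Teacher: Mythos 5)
Your proof is correct and follows essentially the same route as the paper: the span of the monodromy orbit of $f$ is an invariant subspace, so irreducibility forces it to be all of the four-dimensional solution space, whence four suitable operators $M_1,\ldots,M_4$ exist. The paper phrases this as a contradiction with the $\mathbb{C}$-span having dimension $\leq 3$, while you work directly with the $\mathbb{Q}$-span and descend irreducibility from $\mathbb{C}$ to $\mathbb{Q}$ by tensoring; this is only a cosmetic difference, and your extra care on the $\mathbb{Q}$-versus-$\mathbb{C}$ point is a mild improvement in rigor.
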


\begin{proof}
Since $\mathcal{S}_\mathbb{Q}$ is monodromy-invariant, $\mathcal{B}\subset\mathcal{S}_\mathbb{Q}$. Assume that for all $M_1,\ldots,M_4\in Mon(\mathcal{P})$ the elements of $\mathcal{B}$ span inside $\mathcal{S}ol(\mathcal{P},t_0)$ a subspace of dimension $\leq 3$. Let $S$ be the $\mathbb{C}$-linear span of $M(f)$, $M\in Mon(\mathcal{P})$. The subspace $S$ is monodromy-invariant and $\dim S\leq 3$. But then $S\neq\mathcal{S}ol(\mathcal{P},t_0)$, which contradicts the fact that the monodromy group $Mon(\mathcal{P})$ acts irreducibly.
\end{proof}

Fix a non-zero solution $f\in\mathcal{S}_\mathbb{Q}$ and let $\mathcal{B}=\mathcal{B}(f)$ be as in the lemma. For $\alpha\in\mathbb{C}^*$ we have $\mathcal{B}(\alpha f)=\alpha\cdot\mathcal{B}(f)$, $Mon^{\mathcal{B}(f)}=Mon^{\mathcal{B}(\alpha f)}$ and $Mon^{\mathcal{B}(f)}(\mathcal{P})\subset\mathrm{GL}(4,\mathbb{Q})$ if and only if $Mon^{\mathcal{B}(\alpha f)}(\mathcal{P})\subset\mathrm{GL}(4,\mathbb{Q})$. Thus we can use Lemma \ref{lemma:mon-basis} when $f\not\in \mathcal{S}_\mathbb{Q}$ but $[f]\in \mathbb{P}\left(\mathcal{S}_\mathbb{Q}\right)\subset \mathbb{P}\left(\mathcal{S}ol(\mathcal{P},t_0)\right)$.

\begin{lemma}\label{lemma:unique}
Let $A:=\sum_{i=1}^{n} q_iM_i$, $q_i\in\mathbb{Q},\ M_i\in Mon(\mathcal{P})$. Assume that $\operatorname{rank}(A)=1$, resp. $\operatorname{corank}(A)=1$, and take $f\in\mathcal{S}ol(\mathcal{P},t_0)$ such that $\operatorname{im}(A)=\mathbb{C}\cdot f$, resp. $\operatorname{ker}(A)=\mathbb{C}\cdot f$. Then $\alpha f\in\mathcal{S}_\mathbb{Q}$ for some $\alpha\in\mathbb{C}^*$.
\end{lemma}	

\begin{proof}
$A$ acts as an endomorphism of $\mathcal{S}_\mathbb{Q}$. Let $g\in\mathcal{S}_\mathbb{Q}$ be such that \mbox{$\operatorname{im}\left(A|_{\mathcal{S}_\mathbb{Q}}\right)=\mathbb{Q}\cdot g$}, resp. \mbox{$\operatorname{ker}\left(A|_{\mathcal{S}_\mathbb{Q}}\right)=\mathbb{Q}\cdot g$}. Then $g\in\mathbb{C}\cdot f$.
\end{proof}

If $M_\sigma$ is a local monodromy operator around $\sigma\in\Sigma$ and $A=\sum_{i=0}^n a_iM_\sigma^i$, $a_i\in\mathbb{Q}$, the rank and the co-rank of $A$ can be computed by assuming that $M_\sigma$ is given in its Jordan form. In the construction of the Doran-Morgan basis in \cite{Doran-Morgan} one takes $A:=\left(M_0-\Id\right)^3$, where $0$ is a MUM point. Then $\operatorname{rank}(A)=1$ and the generator of $\operatorname{im}(A)$ is the unique holomorphic solution in a neighbourhood of $0$. It is equivalent to taking $A:=M_0-\Id$, since then $\operatorname{corank}(A)=1$ and the kernel of $A$ is spanned by the same solution.

Orphan operators do not have a point of maximal unipotent monodromy. Instead we use a singular point $s\in\Sigma$ of type $\tfrac{1}{n}C$. Put $A:=M_s^n-\Id$. The Jordan form of $M_s^n$ is
$$
\begin{pmatrix}
1&0&0&0\\
0&1&1&0\\
0&0&1&0\\
0&0&0&1\\
\end{pmatrix}
$$
and $\operatorname{rank}(A)=1$. The subspace $\operatorname{im}(A)$ is spanned by the conifold period $f_c$. By Lemma \ref{lemma:unique} we may assume $f_c\in\mathcal{S}_\mathbb{Q}$. Let $M_1,\cdots,M_4$ be as in the conclusion of Lemma \ref{lemma:mon-basis} applied to $f=f_c$ and put $f_i:=M_i(f)$. Fix a non-singular base point $t_0\in\Sigma$ and consider the matrix
$$
R:=\left(f_i^{(j-1)}(t_0)\right)_{i,j=1\ldots,4}
$$
The Frobenius method implies that the map
$$
\mathcal{S}ol(\mathcal{P},t_0)\ni g\mapsto \begin{pmatrix}g(t_0)\\g'(t_0)\\g''(t_0)\\g^{(3)}(t_0)\end{pmatrix}\in\mathbb{C}^4
$$
is an isomorphism. Thus $\det R\neq 0$ if and only if $f_1,\ldots,f_4$ are linearly independent. Condition $\det R\neq 0$ can be verified numerically, provided sufficient accuracy of the approximations used.

This way we obtain potential generating sets $\left\{M_1(f_c),\cdots,M_4(f_c)\right\}$ and an efficient test on whether they really form bases of the rational subspace $\mathcal{S}_\mathbb{Q}$. It is usually not hard to find appropriate $M_1,\cdots,M_4$ such that $\det R\neq 0$. In these cases the inequality can be proven (not only observed numerically) using precise bounds on the approximation error.

Results of this approach applied to \emph{double octic orphan operators} are described in the next section.

\section{Monodromy groups of orphan operators}\label{sec:mongr}

A \textit{double octic} is a smooth Calabi-Yau threefold $X$ obtained as a resolution of singularities of a double cover $\overline{X}\rightarrow\mathbb{P}^3$ branched along a union of eight planes. A \textit{double octic operator} is a Picard-Fuchs operator of a one-parameter family of double octics $X_t$ defined over $\mathbb{Q}$ and satisfying $h^{2,1}(X_t)=1$. Double octics $X$ defined over $\mathbb{Q}$ such that $h^{2,1}(X)\leq 1$ were classified in \cite{Cynk-Kocel} and the Picard-Fuchs operators for those with $h^{2,1}(X)=1$ are known (see \cite{Chmiel1, Cynk-van Straten}).

Monodromy groups of double octic operators with a MUM point were found in \cite{Chmiel1} using a generalization of the Doran-Morgan method. The focus of the present paper is on double octic operators which do not have a point of maximal unipotent monodromy. There are 25 double octic orphan operators. A complete list can be found in \cite{Cynk-van Straten}.

Seven of them are of order 2 and the monodromy action is not irreducible. These examples are less interesting since the associated local system decomposes into a tensor product of a constant system and a well-understood system of rank $2$ (see \cite{GvG, Rohde}).

The remaining 18 double octic orphan operators are of order 4. Corresponding arrangements of eight planes are listed in \cite{Cynk-van Straten}; we follow the numbering from this paper. The list of orphan arrangements, with arrangements yielding equivalent Picard-Fuchs operators listed together, is as follows:
$$
\begin{tabular}{l}
\textbf{33},\ 70 \\
\textbf{35},\ 71,\ 247,\ 252 \\
\textbf{97},\ 98 \\
\textbf{152},\ 198 \\
\textbf{153},\ 197 \\
\textbf{243} \\
\textbf{248} \\
\textbf{250},\ 258 \\
\textbf{266},\ 273 \\
\end{tabular}
$$

Now we present rational realizations of monodromy groups of double octic orphan operators. Let $\mathcal{P}$ be such an operator. The following list contains local monodromy operators $M_\sigma$ for singular points $\sigma\in\Sigma$ written in a rational basis $\mathcal{B}$. These bases were found using the procedure from the previous section. For all operators except \textbf{266} the basis $\mathcal{B}$ is of the form
$$\big\{f_c,N_{\sigma_1}(f_c),N_{\sigma_2}(N_{\sigma_1}(f_c)),N_{\sigma_3}(N_{\sigma_2}(N_{\sigma_1}(f_c)))\big\},$$
where $\sigma_i\in\Sigma$, $N_{\sigma_i}:=M_{\sigma_i}-\operatorname{Id}$ and $f_c$ is the conifold period associated with a singularity $s\in\Sigma$ of type $\tfrac{1}{n}$C. The basis $\mathcal{B}$ is then given in the format $[s;M_{\sigma_1},M_{\sigma_2},M_{\sigma_3}]$.

\medskip

\textbf{Operator 33}

\medskip

$\mathcal{B}=[1;M_0,M_2,M_0]$

\medskip

$
M_{1}=
\begin{pmatrix}
-1&-4&-24&16\\
0&2&2&4\\
0&-1&-1&-4\\
0&-\tfrac{1}{4}&-\tfrac{1}{2}&0\\
\end{pmatrix}
$
$
M_{0}=
\begin{pmatrix}
1&0&0&0\\
1&1&0&0\\
0&0&1&0\\
0&0&1&1\\
\end{pmatrix}
$
$
M_{2}=
\begin{pmatrix}
1&0&0&0\\
0&1&0&0\\
-\tfrac{1}{2}&1&1&12\\
0&0&0&1\\
\end{pmatrix}
$

\pagebreak

\textbf{Operator 35}

\medskip

$\mathcal{B}=[0;M_1,M_{-1},M_1]$

\medskip

$
M_{0}=
\begin{pmatrix}
-1&-24&-32&-512\\
0&3&-8&32\\
0&\tfrac{1}{2}&-1&8\\
0&-\tfrac{1}{8}&\tfrac{1}{2}&-1\\
\end{pmatrix}
$
$
M_{1}=
\begin{pmatrix}
1&0&0&0\\
1&1&0&0\\
0&0&1&0\\
0&0&1&1\\
\end{pmatrix}
$
$
M_{-1}=
\begin{pmatrix}
1&0&0&0\\
0&1&0&0\\
\tfrac{1}{4}&1&1&32\\
0&0&0&1\\
\end{pmatrix}
$

\medskip

\textbf{Operator 97}

\medskip

$\mathcal{B}=[-2;M_{-1},M_{0},M_{-1}]$

\medskip

$
M_{0}=
\begin{pmatrix}
-3&0&0&-32\\
2&1&0&16\\
-\tfrac{1}{2}&1&-1&-8\\
\tfrac{1}{4}&0&0&3\\
\end{pmatrix}
$
$
M_{-2}=
\begin{pmatrix}
1&4&-16&0\\
0&1&0&0\\
0&0&1&0\\
0&0&0&1\\
\end{pmatrix}
$
$
M_{-1}=
\begin{pmatrix}
1&0&0&0\\
1&1&0&0\\
0&0&1&0\\
0&0&1&1\\
\end{pmatrix}
$

\medskip

\textbf{Operator 152}

\medskip

$\mathcal{B}=[0;M_{-1},M_{1},M_{-1}]$

\medskip

$
M_{0}=
\begin{pmatrix}
-1&-4&32&-64\\
0&1&0&0\\
0&\tfrac{1}{2}&-1&0\\
0&\tfrac{1}{8}&-\tfrac{1}{2}&1\\
\end{pmatrix}
$
$
M_{1}=
\begin{pmatrix}
-3&0&0&-128\\
1&1&0&32\\
-\tfrac{1}{4}&1&-1&-16\\
\tfrac{1}{16}&0&0&3\\
\end{pmatrix}
$
$
M_{-1}=
\begin{pmatrix}
1&0&0&0\\
1&1&0&0\\
0&0&1&0\\
0&0&1&1\\
\end{pmatrix}
$

\medskip

\textbf{Operator 153}

\medskip

$\mathcal{B}=[-1;M_{0},M_{-2},M_{0}]$

\medskip

$
M_{-1}=
\begin{pmatrix}
1&0&0&0\\
1&1&0&32\\
0&0&1&0\\
0&-\tfrac{1}{8}&1&-3\\
\end{pmatrix}
$
$
M_{0}=
\begin{pmatrix}
-1&-68&448&-2368\\
0&-\tfrac{91}{3}&\tfrac{470}{3}&-\tfrac{3008}{3}\\
0&\tfrac{4}{3}&-\tfrac{17}{3}&\tfrac{128}{3}\\
0&\tfrac{9}{8}&-\tfrac{45}{3}&37\\
\end{pmatrix}
$
$
M_{-2}=
\begin{pmatrix}
-9&0&0&-512\\
-\tfrac{25}{6}&1&0&-\tfrac{640}{3}\\
\tfrac{1}{6}&1&-1&\tfrac{112}{3}\\
\tfrac{5}{32}&0&0&9\\
\end{pmatrix}
$

\medskip

\textbf{Operator 243}

\medskip

$\mathcal{B}=[1;M_{\infty}^2,M_{\frac{3}{2}},M_{\infty}^2]$

\medskip

$
M_{1}=
\begin{pmatrix}
-1&-204&360&-14880\\
0&\tfrac{43}{4}&-\tfrac{65}{2}&650\\
0&\tfrac{9}{10}&-2&60\\
0&-\tfrac{21}{160}&\tfrac{7}{16}&-\tfrac{31}{4}\\
\end{pmatrix}
$
$
M_{2}=
\begin{pmatrix}
-2&-12&-120&-1440\\
-\tfrac{9}{16}&-\tfrac{5}{4}&-\tfrac{45}{2}&-270\\
\tfrac{3}{49}&\tfrac{3}{10}&4&36\\
\tfrac{3}{640}&\tfrac{3}{160}&\tfrac{3}{16}&\tfrac{13}{4}\\
\end{pmatrix}
$
$
M_{\frac{3}{2}}=
\begin{pmatrix}
1&0&0&0\\
0&1&0&0\\
\tfrac{1}{20}&1&1&88\\
0&0&0&1\\
\end{pmatrix}
$
$
M_{\infty}=
\begin{pmatrix}
-1&0&0&0\\
-\tfrac{1}{2}&-1&0&0\\
0&0&-1&0\\
0&0&-\tfrac{1}{2}&-1\\
\end{pmatrix}
$

\pagebreak

\textbf{Operator 248}

\medskip

$\mathcal{B}=[-\frac{1}{2};M_{-1},M_{0},M_{-1}]$

\medskip

$
M_{0}=
\begin{pmatrix}
1&0&0&0\\
0&1&0&0\\
-\tfrac{1}{2}&1&1&16\\
0&0&0&1\\
\end{pmatrix}
$
$
M_{-\tfrac{1}{2}}=
\begin{pmatrix}
1&4&16&32\\
0&1&0&0\\
0&0&1&0\\
0&0&0&1\\
\end{pmatrix}
$
$
M_{-1}=
\begin{pmatrix}
1&0&0&0\\
1&1&0&0\\
0&0&1&0\\
0&0&1&1\\
\end{pmatrix}
$
$
M_{-\tfrac{3}{2}}=
\begin{pmatrix}
3&4&32&32\\
-1&-1&-16&-16\\
0&0&1&0\\
0&0&0&1\\
\end{pmatrix}
$
$
M_{-2}=
\begin{pmatrix}
17&16&128&0\\
-8&-7&-64&0\\
-1&-1&-7&0\\
\tfrac{1}{2}&\tfrac{1}{2}&4&1\\
\end{pmatrix}
$

\medskip

\textbf{Operator 250}

\medskip

$\mathcal{B}=[1;M_{\infty},M_{-2},M_{\infty}]$

\medskip

$
M_{1}=
\begin{pmatrix}
1&-8&64&-128\\
0&1&0&0\\
0&0&1&0\\
0&0&0&1\\
\end{pmatrix}
$
$
M_{\infty}=
\begin{pmatrix}
1&-4&0&0\\
1&-3&0&0\\
0&0&1&-4\\
0&0&1&-3\\
\end{pmatrix}
$
$
M_{-1}=
\begin{pmatrix}
-1&24&-80&224\\
0&7&-24&80\\
\tfrac{3}{8}&\tfrac{3}{4}&-5&28\\
\tfrac{1}{16}&-\tfrac{3}{8}&1&-1\\
\end{pmatrix}
$
$
M_{-2}=
\begin{pmatrix}
1&0&0&0\\
0&1&0&0\\
-\tfrac{1}{2}&1&1&-8\\
0&0&0&1\\
\end{pmatrix}
$

\medskip

\textbf{Operator 266}

\medskip

$\mathcal{B}=\left\{f_c,N_{-\frac{1}{2}}(f_c),N_{-\frac{1}{4}}(f_c),N_{0}(f_c)\right\}$; $f_c$ is the conifold period at $-1$

\medskip

$
M_{-\tfrac{1}{4}}=
\begin{pmatrix}
1&0&0&0\\
0&1&0&0\\
1&-6&1&-3\\
0&0&0&1\\
\end{pmatrix}
$
$
M_{-\tfrac{1}{2}}=
\begin{pmatrix}
1&-\tfrac{12}{5}&\tfrac{24}{5}&\tfrac{12}{5}\\
1&-\tfrac{11}{5}&-\tfrac{8}{5}&-\tfrac{24}{5}\\
0&\tfrac{3}{5}&\tfrac{1}{5}&-\tfrac{3}{5}\\
0&-\tfrac{2}{5}&\tfrac{4}{5}&\tfrac{7}{5}\\
\end{pmatrix}
$
$
M_{-1}=
\begin{pmatrix}
1&12&-16&0\\
0&1&0&0\\
0&0&1&0\\
0&0&0&1\\
\end{pmatrix}
$
$
M_{0}=
\begin{pmatrix}
1&\tfrac{96}{5}&-\tfrac{48}{5}&0\\
0&\tfrac{13}{5}&-\tfrac{4}{5}&0\\
0&\tfrac{36}{5}&-\tfrac{13}{5}&0\\
1&\tfrac{16}{5}&-\tfrac{28}{5}&-1\\
\end{pmatrix}
$
$
M_{\tfrac{1}{2}}=
\begin{pmatrix}
5&-12&-16&-12\\
0&1&0&0\\
1&-3&-3&-3\\
0&0&0&1\\
\end{pmatrix}
$

\section{Symplectic basis for the monodromy action}\label{sec:symplectic}

There exists a monodromy-invariant lattice $\mathcal{S}_\mathbb{Z}:=H^3(X_{t_0},\mathbb{Z})\subset H^3(X_{t_0},\mathbb{Q})=\mathcal{S}_\mathbb{Q}$. The cup product
$$
\cup:H^3(X_{t_0},\mathbb{Z})\times H^3(X_{t_0},\mathbb{Z})\rightarrow H^6(X_{t_0},\mathbb{Z})\simeq\mathbb{Z}
$$
defines a non-degenerate skew-symmetric bilinear form on $\mathcal{S}_\mathbb{Z}$. It follows that there exists a \textit{symplectic basis} of $\mathcal{S}ol(\mathcal{P},t_0)$, i.e. a basis $\mathcal{B}$ such that $Mon^\mathcal{B}(\mathcal{P})\subset \mathrm{Sp}(4,\mathbb{Z})$.

Monodromy group of a hypergeometric operator is generated by local monodromies around a MUM point and a conifold point. In the Doran-Morgan basis the monodromy matrices of these generators are
$$
\begin{pmatrix}
1&0&0&0\\
1&1&0&0\\
0&d&1&0\\
0&0&1&1\\
\end{pmatrix}\quad\textnormal{ and }\quad
\begin{pmatrix}
1&-k&-1&-1\\
0&1&0&0\\
0&0&1&0\\
0&0&0&1\\
\end{pmatrix}
$$
for some $d,k\in\mathbb{Z}$. They can be brought to the symplectic group $\mathrm{Sp}(4,\mathbb{Z})$ by conjugating with the matrix
$$
\begin{pmatrix}
0&-1&0&0\\
-1&0&0&0\\
0&-d&-1&0\\
0&k&1&1\\
\end{pmatrix}
$$
Corresponding generators of the monodromy group are
$$
\begin{pmatrix}
1&1&0&0\\
0&1&0&0\\
d&d&1&0\\
0&-k&-1&1\\
\end{pmatrix}\quad\textnormal{ and }\quad
\begin{pmatrix}
1&0&0&0\\
0&1&0&1\\
0&0&1&0\\
0&0&0&1\\
\end{pmatrix}
$$
The monodromy around a conifold singularity is a symplectic reflection.

Given a symplectic basis $\mathcal{B}$ for $Mon(\mathcal{P})$, interesting arithmetical questions about the monodromy group $Mon^\mathcal{B}(\mathcal{P})\subset\mathrm{Sp}(4,\mathbb{Z})$ arise. The first one is whether $Mon^\mathcal{B}(\mathcal{P})$ is arithmetic or thin.

\begin{definition}
Let $G$ be a finitely generated, Zariski-dense subgroup of $\mathrm{Sp}(4,\mathbb{Z})$. $G$ is called \emph{arithmetic} if the index $\left[\mathrm{Sp}(4,\mathbb{Z}):G\right]$ is finite. Otherwise, it is called \emph{thin}.
\end{definition}

\begin{remark}
If $G\subset\mathrm{Sp}(4,\mathbb{Z})$ is a subgroup such that $[\mathrm{Sp}(4,\mathbb{Z}):\overline G]<+\infty$, then $\overline{G}=\mathrm{Sp}(4,\mathbb{Z})$. Indeed, the closure $\overline{G}$ is a Zariski-clopen subgroup of $\mathrm{Sp}(4,\mathbb{Z})$. In particular, if $G$ is of finite index, it is arithmetic.
\end{remark}

Question of arithmeticity is difficult even for hypergeometric operators. There are $14$ hypergeometric operators and their symplectic monodromy group depends on two parameters $(d,k)\in\mathbb{N}^2$ as above. It is arithmetic for $(d,k)=$ (1,2), (1,3), (2,3), (3,4), (4,4), (6,5), (9,6) and thin for the remaining pairs \mbox{$(d,k)=$ (1,4), (2,4), (4,5), (5,5), (8,6), (12,7), (16,8)} (see \cite{Brav-Thomas,Singh,Singh-Venkataramana}).

A necessary condition for arithmecity is given by the following:

\begin{observation}\label{ob:index_MUM}
Let $G\subset \mathrm{Sp}(4,\mathbb{Z})$ be a subgroup which does not contain a maximally unipotent matrix. Then the index $[\mathrm{Sp}(4,\mathbb{Z}):G]$ is infinite.
\end{observation}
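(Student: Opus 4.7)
The plan is to prove the contrapositive: if $G\subset\mathrm{Sp}(4,\mathbb{Z})$ has finite index $n$, then $G$ must contain a maximally unipotent matrix. The strategy has three short steps.

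First, I would fix any maximally unipotent element $U\in\mathrm{Sp}(4,\mathbb{Z})$. Such a matrix is easy to exhibit directly; for example, the local monodromy at a MUM point of any non-orphan Picard-Fuchs operator (there are plenty in the references cited in the introduction) already furnishes one, or one can simply write down an explicit unipotent symplectic integral $U=\mathrm{Id}+N$ with $N^{4}=0$ and $N^{3}\neq 0$.

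Second, a pigeonhole argument on the $n+1$ right cosets $G,\,UG,\,U^{2}G,\dots,U^{n}G$ produces two indices $0\leq i<j\leq n$ with $U^{i}G=U^{j}G$, hence $U^{k}\in G$ for some $k:=j-i\in\{1,\dots,n\}$.

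Third, I would verify that $U^{k}$ is itself maximally unipotent. Writing $N:=U-\mathrm{Id}$, the binomial expansion for $(\mathrm{Id}+N)^{k}$ terminates at $N^{3}$ because $N^{4}=0$, giving
$$U^{k}-\mathrm{Id}=kN+\binom{k}{2}N^{2}+\binom{k}{3}N^{3}.$$
Cubing this expression and discarding every summand containing $N^{j}$ with $j\geq 4$ collapses everything to $(U^{k}-\mathrm{Id})^{3}=k^{3}N^{3}$, which is nonzero since $N^{3}\neq 0$ and we work in characteristic zero. The identity $(U^{k}-\mathrm{Id})^{4}=0$ is automatic. Hence $U^{k}\in G$ is maximally unipotent, contradicting the assumption on $G$.

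The argument has no genuine obstacle; the only delicate point is the matrix identity in the third step, where one must ensure that the leading coefficient $k^{3}$ remains nonzero so that the maximally unipotent structure is preserved under taking powers.
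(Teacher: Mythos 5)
Your proposal is correct and is essentially the paper's own argument in contrapositive form: both rest on the key fact that every positive power of a maximally unipotent matrix is again maximally unipotent (which the paper checks via the explicit formula for $M^k$ in Jordan form and you check via the binomial expansion of $(\mathrm{Id}+N)^k$), combined with the observation that the cosets $U^kG$ detect whether some power of $U$ lands in $G$. The differences are purely presentational, so nothing further is needed.
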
 

\begin{proof}
Let
$$
M:=\begin{pmatrix}
1&1&0&0\\
0&1&1&0\\
0&0&1&1\\
0&0&0&1\\
\end{pmatrix}
$$
be a maximally unipotent matrix in its Jordan form. For all $k\in\mathbb{N}$ we have
$$
M^k=\begin{pmatrix}
1&k&\binom{k}{2}&\binom{k}{3}\\
0&1&k&\binom{k}{2}\\
0&0&1&k\\
0&0&0&1\\
\end{pmatrix}
$$
It follows that a power of a maximally unipotent matrix is again maximally unipotent. If $A$ is a maximally unipotent element of $\mathrm{Sp}(4,\mathbb{Z})$, we conclude that $A^k\not\in G$ for all $k\in\mathbb{N}_{>0}$ and the cosets $A^kG$ are different.
\end{proof}	

Local monodromies of orphan operators, generating the monodromy group, are not maximally unipotent. If the monodromy group does not contain \textit{any} maximally unipotent elements, its index in $\mathrm{Sp}(4,\mathbb{Z})$ is infinite. However, we have the following result:

\begin{theorem}\label{th:MUM}
Let $\mathcal{P}$ be a double octic orphan operator, other than operator \textbf{35}. The monodromy group $Mon(\mathcal{P})$ contains a maximally unipotent element.
\end{theorem}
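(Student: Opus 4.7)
The proof is a case-by-case verification. For each of the eight double octic orphan operators of Section~\ref{sec:mongr} other than \textbf{35}, I exhibit an explicit word $W$ in the local monodromy generators whose matrix, in the rational basis fixed in that section, is maximally unipotent.

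The verification itself is elementary. A matrix $W \in \mathrm{GL}(4,\mathbb{Q})$ is maximally unipotent if and only if its characteristic polynomial equals $(x-1)^4$ and $\operatorname{rank}(W-\operatorname{Id})=3$; equivalently, $(W-\operatorname{Id})^4 = 0$ and $(W-\operatorname{Id})^3 \neq 0$. Once $W$ is written out, both conditions reduce to routine $4\times 4$ arithmetic over $\mathbb{Q}$ and can be confirmed without numerical issues, since the generators of Section~\ref{sec:mongr} have rational entries.

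To produce a candidate word I would proceed by systematic enumeration of short products. Since each local monodromy is quasi-unipotent, some low power $M_s^{n_s}$ is unipotent; in particular, the generators at singular points of type $\tfrac{1}{1}C$ and type-$K$ with $n=1$ are already unipotent, typically satisfying $(M_s-\operatorname{Id})^2=0$, while higher-order generators contribute after being raised to the appropriate power. These unipotent building blocks generically have different invariant flags for different $s$, so their products may develop a longer unipotent Jordan filtration. One then enumerates words $W = M_{s_{i_1}}^{a_1} \cdots M_{s_{i_k}}^{a_k}$ of bounded length in these building blocks, computes $W$, and tests the two conditions; in practice words of length at most three suffice for all eight operators.

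The main obstacle is that the argument is fundamentally \emph{ad hoc}: there is no structural reason — in the absence of a MUM point — forcing the existence of a maximally unipotent element in $Mon(\mathcal{P})$. This is exactly why the statement excludes operator \textbf{35}: the same enumeration applied there yields no maximally unipotent element within the search range, and whether one exists at all remains open. For the remaining eight operators, the proof consists of presenting, for each in turn, an explicit word $W$ and checking that the resulting rational matrix satisfies $\chi_W(x)=(x-1)^4$ and $\operatorname{rank}(W-\operatorname{Id})=3$.
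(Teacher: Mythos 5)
Your overall strategy is the same as the paper's: Theorem \ref{th:MUM} is proved there by a computer search through products of the local monodromy generators and their inverses, followed by exhibiting, for each of the eight operators, one explicit rational matrix whose minimal polynomial is $(X-\operatorname{Id})^4$. Your verification criterion ($\chi_W=(x-1)^4$ together with $\operatorname{rank}(W-\operatorname{Id})=3$, equivalently $(W-\operatorname{Id})^4=0\neq(W-\operatorname{Id})^3$) is correct and the check is indeed exact rational arithmetic. As written, though, your proposal stops short of producing the witnesses, and a statement of this kind has no content until they are written down; the paper does list all eight matrices explicitly.

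There is also a concrete flaw in the search recipe you describe. You propose to enumerate words in the \emph{unipotent building blocks} $M_s^{n_s}$, i.e.\ to search inside the unipotently generated subgroup $Mon(\mathcal{P})^{un}$. For operator \textbf{153} this cannot work: its finite singular points are all of type $\tfrac12 C$ and the point at infinity has local monodromy of order $2$, so $Mon(\mathcal{P}_{\textbf{153}})^{un}$ is generated by rank-one unipotents; the paper shows (Section \ref{ss:153}) that this subgroup is not Zariski-dense and that the part coming from the finite singularities sits in block form $\bigl(\begin{smallmatrix}A&0\\0&\operatorname{Id}_2\end{smallmatrix}\bigr)$, so its elements satisfy $\dim\ker(W-\operatorname{Id})\geq 2$ and are never maximally unipotent. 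Correspondingly, the paper's witness for \textbf{153} is a word in the local monodromies themselves, interleaving odd powers of the order-two conifold monodromies, and hence lies outside your search space. Your quantitative claim that ``words of length at most three suffice'' is also unsupported: the witnesses actually needed have up to eight factors (operators \textbf{152} and \textbf{153}). The fix is simply to search, as the paper does, over bounded-length products of the generators \emph{and their inverses} without restricting to unipotent powers, and then to record the resulting matrices.
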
 

\begin{proof}
We have the following maximally unipotent elements in orphan monodromy groups:

\pagebreak

\textbf{Operator 33}

$\left(M_2\cdot M_0\cdot M_1^{-1}\right)^{-2}=
\begin{pmatrix}
81&-184&240&-2848\\
34&-77&92&-1128\\
-23&52&-63&768\\
-2&\tfrac{9}{2}&-5&63\\
\end{pmatrix}
$

\textbf{Operator 97}

$
\left(M_{-2}\cdot M_{0}^{-1}\cdot M_{-2}^{-1}\cdot M_{-1}^{-1}\cdot M_{-2}\cdot M_{-2}\cdot M_0\right)^{-1}=
\begin{pmatrix}
157&-772&2704&1920\\
-36&181&-624&-448\\
-10&52&-175&-128\\
-13&64&-224&-159\\
\end{pmatrix}
$

\textbf{Operator 152}

$
\left(M_{1}^{-1} \cdot M_{-1}^{1} \cdot M_{-1}^{-1} \cdot M_{1} \cdot M_{1} \cdot M_{0} \cdot M_{0} \cdot M_{-1}\right)^{-1}=
\begin{pmatrix}
-1031&-8&-992&-32768\\
1032&9&992&32768\\
-32&0&-31&-1024\\
33&0&32&1057\\
\end{pmatrix}
$

\textbf{Operator 153}

$
\left(M_{-1}^{-1} \cdot M_{0} \cdot M_{-2} \cdot M_{0}^{-1} \cdot M_{0}^{-1} \cdot M_{-1}^{-1} \cdot M_{0}^{-1} \cdot M_{-2}\right)^{-1}=
\begin{pmatrix}
2673&-14016&75072&-315392\\
\tfrac{3496}{3}&-5759&32576&-129024\\
-\tfrac{196}{3}&352&-1839&7936\\
-\tfrac{89}{2}&220&-1244&4929\\
\end{pmatrix}
$

\textbf{Operator 243}

$
M_{\infty} \cdot M_{2}^{-1} \cdot M_{1} \cdot M_{1} \cdot M_{\infty}^{-1} \cdot M_{1}^{-1} \cdot M_{1}^{-1}=
\begin{pmatrix}
-578&167628&-597720&11175840\\
-\tfrac{5937}{16}&\tfrac{430453}{4}&-\tfrac{767385}{2}&7174590\\
\tfrac{87}{8}&-\tfrac{31539}{10}&11248&-210276\\
\tfrac{783}{128}&-\tfrac{283851}{160}&\tfrac{101223}{16}&-\tfrac{473117}{4}\\
\end{pmatrix}
$

\textbf{Operator 248}

$
\left(M_{-\frac{3}{2}}^{-1} \cdot M_{0} \cdot M_{-\frac{1}{2}} \cdot M_{-\frac{3}{2}}^{-1}\right)^{-1}=
\begin{pmatrix}
-23&-28&-304&-96\\
-6&-7&-80&-64\\
\tfrac{5}{2}&3&33&16\\
0&0&0&1\\
\end{pmatrix}
$

\textbf{Operator 250}

$
M_{-1}^{-1} \cdot M_{-2}^{-1} \cdot M_{-2}^{-1} \cdot M_{-1}^{-1} \cdot M_{1}=
\begin{pmatrix}
-175&1592&-10560&16256\\
-28&257&-1696&2624\\
\tfrac{9}{2}&-40&269&-408\\
\tfrac{15}{4}&-34&226&-347\\
\end{pmatrix}
$

\textbf{Operator 266}

$
M_{-\frac{1}{4}} \cdot M_{0} \cdot M_{\frac{1}{2}} \cdot M_{-\frac{1}{4}} \cdot M_{-\frac{1}{2}}=
\begin{pmatrix}
-\tfrac{2927}{5}&624&\tfrac{19208}{5}&\tfrac{23544}{5}\\
-\tfrac{61}{5}&13&\tfrac{404}{5}&\tfrac{492}{5}\\
-\tfrac{197}{5}&42&\tfrac{1293}{5}&\tfrac{1584}{5}\\
-\tfrac{197}{5}&42&\tfrac{1288}{5}&\tfrac{1589}{5}\\
\end{pmatrix}
$
\end{proof}

\begin{remark}
The Doran-Morgan construction can be applied to operators with a point of maximal unipotent monodromy and a conifold point. Generalization of this method from \cite{Chmiel1} requires only \emph{some} maximally unipotent monodromy element in the monodromy group. Theorem \ref{th:MUM} shows that for double octic orphan operators, other than \textbf{35}, one can construct such generalized Doran-Morgan bases despite the lack of a MUM singularity.
\end{remark}

To find a symplectic basis, first we find the invariant skew-symmetric form. Let $G:=Mon^\mathcal{B}(\mathcal{P})\subset\mathrm{GL}(4,\mathbb{Q})$ be a rational realization of the monodromy group with generators $M_1,\ldots,M_n$. Let $S=\left(s_{i,j}\right)_{i,j=1,\ldots,4}$ be a generic antisymmetric non-singular matrix. Let $\mathrm{S}=(\mathrm{s}_{i,j})_{i,j=1,\ldots,4}$ be a solution of the following system of $16n$ linear equations:
\begin{equation}\label{eq:system}
\big(M_k\cdot S\cdot M_k^T-S\big)_{i,j}=0, \quad k=1,\ldots,n,\quad i,j=1,\ldots,4.
\end{equation}
If $\mathcal{B}$ is a generalized Doran-Morgan basis, the solution $\mathrm{S}$ is unique up to scalar (see \cite{Chmiel1}). If $\mathcal{B}$ is as in section \ref{sec:mongr}, solving the system (\ref{eq:system}) shows the solution $\mathrm{S}$ is also unique up to scalar. The bilinear form $\omega(x,y):=x^T\cdot\mathrm{S}\cdot y$ is a $G$-invariant symplectic form on $\mathbb{Q}^4\simeq\mathcal{S}_\mathbb{Q}$.

Let
$$
\Omega:=
\begin{pmatrix}
0&\operatorname{Id}_2\\
-\operatorname{Id}_2&0
\end{pmatrix}
$$
be the matrix of standard symplectic form invariant under $\mathrm{Sp}(4,\mathbb{Z})$ and let $T=\left(t_{i,j}\right)_{i,j=1,\ldots,4}$ be a generic invertible matrix. Treating coefficients of $T$ as indeterminates we solve the system of non-linear equations
\begin{equation}\label{eq:form}
T^T\cdot \mathrm{S}\cdot T=\Omega
\end{equation}
Let $\mathrm{T}\in\mathrm{GL}(4,\mathbb{Q})$ be a solution and put $\mathcal{B}_s:=\mathrm{T}\cdot\mathcal{B}$. Then $Mon^{\mathcal{B}_s}(\mathcal{P})\subset\mathrm{Sp}(4,\mathbb{Q})$.

A solution $\mathrm{T}$ realizes the monodromy group as a subgroup of $\mathrm{Sp}(4,\mathbb{Q})$ but not necessarily $\mathrm{Sp}(4,\mathbb{Z})$. The symplectic rational subspace $\mathcal{S}_\mathbb{Q}$ is uniquely determined by the conifold period, monodromy-invariance and irreducibility of the monodromy action. To ignore the issues of proving that our symplectic basis spans $\mathcal{S}_\mathbb{Z}$ we will use the following lemma:

\begin{lemma}
Let $G\subset\mathrm{Sp}(4,\mathbb{Z})$ be a subgroup which is Zariski-dense, resp. of finite index. If for some $M\in\mathrm{GL}(4,\mathbb{Q})$ we have $MGM^{-1}\subset\mathrm{Sp}(4,\mathbb{Z})$, then $MGM^{-1}$ is Zariski-dense, resp. of finite index, in $\mathrm{Sp}(4,\mathbb{Z})$.
\end{lemma}

\begin{proof}
First assume that $G$ is Zariski-dense. Since $MGM^{-1}$ is Zariski-dense in $\mathrm{Sp}(4,\mathbb{Z})$ if and only if $MGM^{-1}$ is Zariski-dense in $\mathrm{Sp}(4,\mathbb{Q})$, we will prove the latter. By assumption $G$ is Zariski-dense in $\mathrm{Sp}(4,\mathbb{Z})$, hence the standard symplectic form $\Omega$ is the unique (up to scaling) $G$-invariant skew-symmetric form. Thus $M\Omega M^T$ is the unique (up to scaling) $MGM^{-1}$-invariant skew-symmetric form. But $MGM^{-1}$ is a subgroup of $\mathrm{Sp}(4,\mathbb{Z})$ and so preserves $\Omega$, which implies that $M\Omega M^T=d\cdot\Omega$ for some $d\in\mathbb{Q}^*$. Thus $M$ is in the normalizer of $\mathrm{Sp}(4,\mathbb{Q})$. Since $G$ is Zariski-dense in $\mathrm{Sp}(4,\mathbb{Q})$, $MGM^{-1}$ is Zariski-dense in $M\mathrm{Sp}(4,\mathbb{Q})M^{-1}=\mathrm{Sp}(4,\mathbb{Q})$. The case when $G$ is of finite index follows from Theorem 6 in \cite{Borel}.
\end{proof}

The lemma shows that Zariski-density, resp. arithmecity, of the monodromy group $Mon(\mathcal{P})$ is independent of the choice of basis of $\mathcal{S}_\mathbb{Q}$, i.e. independent of a particular integral symplectic realization. Furthermore, to prove one of these properties for a subgroup $G\subset \mathrm{Sp}(4,\mathbb{Z})$, it is enough to prove it for any subgroup $H\subset G$.

Now we list certain subgroups $H:=H(\mathcal{P})$ of the monodromy groups $Mon(\mathcal{P})$ of double octic orphan operators. They are generated by powers of local monodromies. The list also contains transition matrices $T:=T^\mathcal{B}_{\mathcal{B}_s}$ between the basis $\mathcal{B}$ as in Section \ref{sec:mongr} and a symplectic basis $\mathcal{B}_s$ such that $H^{\mathcal{B}_s}\subset\mathrm{Sp}(4,\mathbb{Z})$.

\textbf{Operator 33}

$H:=\langle M_{1}^{2},M_{0}^{2^3},M_{2}\rangle$;\quad
$
T:=\begin{pmatrix}
	-4&0&192&-16\\
	-2&0&0&0\\
	1&8&-16&0\\
	0&0&8&0\\
\end{pmatrix}
$

\textbf{Operator 35}

$H:=\langle M_{0}^{2},M_{1},M_{-1}\rangle$;\quad
$
T:=\begin{pmatrix}
4&0&20&-4\\
-4&0&12&0\\
-1&-4&-1&0\\
0&0&-1&0\\
\end{pmatrix}
$

\pagebreak

\textbf{Operator 97}

$H:=\langle M_{0}^{2},M_{-2},M_{-1}\rangle$;\quad
$
T:=\begin{pmatrix}
	0&16&0&0\\
	8&0&0&4\\
	2&0&0&0\\
	0&0&1&0\\
\end{pmatrix}
$

\textbf{Operator 152}

$H:=\langle M_{0}^{2},M_{1}^{2},M_{-1}\rangle$;\quad
$
T:=\begin{pmatrix}
	0&32&0&0\\
	4&0&0&4\\
	1&0&0&0\\
	0&0&1&0\\
\end{pmatrix}
$

\textbf{Operator 153}

$H:=\langle M_{-1},M_{0},M_{-2}\rangle$;\quad
$
T:=\begin{pmatrix}
	0&96&-96&0\\
	40&0&-96&24\\
	8&0&0&0\\
	0&0&3&0\\
\end{pmatrix}
$

\textbf{Operator 243}

$H:=\langle M_{1}^{2^3},M_{2},M_{\frac{3}{2}}^{3},M_{\infty}^{2^5\cdot 3}\rangle$;\quad
$
T:=\begin{pmatrix}
-5120&-1920&-480&3200\\
0&-360&120&-160\\
128&48&0&-48\\
0&3&0&0\\
\end{pmatrix}
$

\textbf{Operator 248}

$H:=\langle M_{0},M_{-\frac{1}{2}},M_{-1}^{2^3},M_{-\frac{3}{2}}^{2},M_{-2}^{2^2}\rangle$;\quad
$
T:=\begin{pmatrix}
-8&0&256&-16\\
-4&0&0&0\\
1&8&-16&0\\
0&0&8&0\\
\end{pmatrix}
$

\textbf{Operator 250}

$H:=\langle M_{1},M_{\infty},M_{-1},M_{-2}\rangle$;\quad
$
T:=\begin{pmatrix}
32&-16&0&0\\
16&0&0&16\\
0&0&-2&0\\
-1&0&-1&0\\
\end{pmatrix}
$

\textbf{Operator 266}

$H:=\langle M_{-\frac{1}{4}},M_{-\frac{1}{2}}^{2^2\cdot3\cdot5^2},M_{-1},M_{0}^{2\cdot 3},M_{\frac{1}{2}}\rangle$;\quad
$
T:=\begin{pmatrix}
45&0&-24&15\\
0&0&-4&0\\
0&15&-3&0\\
15&0&0&0\\
\end{pmatrix}
$

\bigskip

To prove that the monodromy groups of double octic orphan operators are Zariski-dense in $\mathrm{Sp}(4,\mathbb{Z})$ we use the following criterion:

\begin{theorem}[Theorem 2.4, \cite{Rivin}]\label{th:criterium}
Let $G\subset\mathrm{Sp}(4,\mathbb{Z})$ be a subgroup. If there exists a prime $p\geq 5$ such that the reduction $\mod p$ map $\pi_p:G\rightarrow\mathrm{Sp}(4,\mathbb{Z}/p\mathbb{Z})$ is surjective, then $G$ is Zariski-dense in $\mathrm{Sp}(4,\mathbb{Z})$.
\end{theorem}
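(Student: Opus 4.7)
The plan is to argue by contrapositive: assume $G$ is not Zariski-dense in $\mathrm{Sp}(4,\mathbb{Z})$, and deduce that no reduction map $\pi_p$ with $p\geq 5$ can be surjective. Let $H$ be the Zariski-closure of $G$ in $\mathrm{Sp}_4$ viewed as a $\mathbb{Q}$-algebraic group; then $H$ is a proper closed subgroup, so $\dim H<\dim\mathrm{Sp}_4=10$. Spread $H$ out to a group scheme over $\mathbb{Z}[1/N]$ for a suitable integer $N$ encoding the primes of bad reduction of $H$. For every prime $p\nmid N$ one has $\pi_p(G)\subseteq H(\mathbb{F}_p)$, so it suffices to show that $H(\mathbb{F}_p)$ is a proper subgroup of $\mathrm{Sp}(4,\mathbb{F}_p)$ for every $p\geq 5$.

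The asymptotic part is a Lang--Weil count: $|H(\mathbb{F}_p)|\leq C\cdot p^{\dim H}$ with $C$ depending only on the geometry of $H$, while $|\mathrm{Sp}(4,\mathbb{F}_p)|=p^4(p^2-1)(p^4-1)$ grows like $p^{10}$. Hence for all sufficiently large primes $H(\mathbb{F}_p)\subsetneq\mathrm{Sp}(4,\mathbb{F}_p)$, and the surjectivity hypothesis fails. To extend the conclusion to every prime $p\geq 5$, I would invoke the Aschbacher--Kleidman--Liebeck classification of maximal subgroups of the quasi-simple group $\mathrm{Sp}(4,\mathbb{F}_p)$: for $p\geq 5$ every maximal subgroup falls into one of the geometric classes (reducible, imprimitive, tensor, subfield) or into a bounded list of almost simple subgroups. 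Checking that the mod-$p$ reduction of any proper algebraic subgroup $H$ of $\mathrm{Sp}_4$ always lands in one of these classes finishes the argument.

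The main obstacle is this second step: matching the algebraic-geometric datum ``$H$ is a proper $\mathbb{Q}$-subgroup of $\mathrm{Sp}_4$'' with the finite-group-theoretic datum ``$\pi_p(G)$ sits inside a proper subgroup of $\mathrm{Sp}(4,\mathbb{F}_p)$'' uniformly in $p\geq 5$. A systematic way is to split cases on the type of the connected component $H^\circ$: if $H^\circ$ is contained in a proper parabolic, $\pi_p(G)$ preserves a flag; if $H^\circ$ is a Levi or a reductive subgroup such as $\mathrm{Sp}_2\times\mathrm{Sp}_2$ or the image of $\mathrm{SL}_2\otimes\mathrm{SL}_2$, its reduction lies in the corresponding geometric Aschbacher class; a finite $H^\circ$ forces $\pi_p(G)$ to have bounded order, which cannot compete with $|\mathrm{Sp}(4,\mathbb{F}_p)|$. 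The hypothesis $p\geq 5$ is what eliminates the small-characteristic pathologies (exceptional isogenies and the exotic automorphism of $\mathrm{Sp}_4$ in characteristic $2$, the sporadic isomorphism $\mathrm{PSp}(4,\mathbb{F}_3)\cong\mathrm{PSU}(4,\mathbb{F}_2)$) that would otherwise block the uniform bookkeeping.
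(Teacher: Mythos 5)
The paper does not actually prove this statement: it is quoted verbatim as Theorem 2.4 of Rivin's \emph{Zariski density and genericity} and used as a black box, so there is no internal argument to compare yours against. Your outline is the standard strategy underlying such results (Zariski closure, spreading out, point counts, classification of subgroups of $\mathrm{Sp}(4,\mathbb{F}_p)$), and in spirit it matches what Rivin's proof ultimately rests on (Nori/Matthews--Vaserstein--Weisfeiler-type input). As a self-contained proof, however, it has two genuine gaps.

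First, you only establish $\pi_p(G)\subseteq H(\mathbb{F}_p)$ for primes $p\nmid N$, where $N$ depends on $G$. Since the theorem quantifies over \emph{every} prime $p\geq 5$ and a given $G$ may have bad reduction at some $p\geq 5$, those primes are simply not covered by your argument. The fix is to take the scheme-theoretic (flat) closure $\overline{H}$ of $H$ inside $\mathrm{Sp}_4/\mathbb{Z}$; then $G\subseteq\overline{H}(\mathbb{Z})$ gives $\pi_p(G)\subseteq\overline{H}(\mathbb{F}_p)$ at all primes, and flatness over the Dedekind base preserves both the fiber dimension and (via the Hilbert polynomial) a degree bound. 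Second, and more seriously, the decisive step --- that $\overline{H}(\mathbb{F}_p)\subsetneq\mathrm{Sp}(4,\mathbb{F}_p)$ for \emph{every} $p\geq 5$, not just for $p$ large relative to $G$ --- is only asserted. Lang--Weil gives $|\overline{H}(\mathbb{F}_p)|\leq C\,p^{\dim H}$ with $C$ depending on $H$ and hence on $G$, so it cannot by itself beat $|\mathrm{Sp}(4,\mathbb{F}_p)|$ at small primes; and the proposed ``checking'' against the Aschbacher--Kleidman--Liebeck classes is precisely where all the content of the theorem lives, yet it is not carried out (one must show, uniformly in $p\geq5$, that the reduction of an arbitrary proper $\mathbb{Q}$-subgroup, including non-connected and non-reductive ones and their possibly non-reduced special fibers, lands in a proper subgroup). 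A cleaner way to close this gap is to use that $\mathrm{PSp}(4,p)$ is simple for $p\geq 5$ together with Nori's theorem (or the Larsen--Pink bounds) relating finite subgroups of $\mathrm{GL}_4(\mathbb{F}_p)$ generated by unipotents to connected algebraic subgroups: surjectivity of $\pi_p$ then forces the special fiber of $\overline{H}$ to contain all of $\mathrm{Sp}_4/\mathbb{F}_p$, whence $\dim H=10$ and $H=\mathrm{Sp}_4$. As it stands, your text is a program rather than a proof.
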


Let $\mathcal{P}$ be a double octic orphan operator and let $H^{\mathcal{B}_s}\subset\mathrm{Sp}(4,\mathbb{Z})$ be the symplectic realization of the subgroup $H\subset Mon(\mathcal{P})$ listed above. Given a subset $X\subset\mathrm{Sp}(4,\mathbb{Z}/p\mathbb{Z})$, it is a standard functionality in GAP to check whether the subgroup generated by $X$ is the entire symplectic group $\mathrm{Sp}(4,\mathbb{Z}/p\mathbb{Z})$. One verifies that the map $\pi_p:H^{\mathcal{B}_s}\rightarrow\mathrm{Sp}(4,\mathbb{Z}/p\mathbb{Z})$ is surjective for $p=7$. Using Theorem \ref{th:criterium} we obtain the following:

\begin{theorem}\label{th:dense}
The monodromy groups of double octic orphan operators are Zariski-dense in $\mathrm{Sp}(4,\mathbb{Z})$.
\end{theorem}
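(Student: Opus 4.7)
The plan is simply to assemble the two results stated immediately before the theorem: the density criterion of Theorem \ref{th:criterium} and the surjectivity computation of Lemma \ref{ob:surjective}. No further theoretical input is needed; the substantive work has already been carried out, namely the explicit construction of integral symplectic realizations $H^{\mathcal{B}_s}\subset\mathrm{Sp}(4,\mathbb{Z})$ in the table at the end of Section \ref{sec:symplectic} and the GAP-based verification that the mod-$7$ reduction hits the whole of $\mathrm{Sp}(4,\mathbb{Z}/7\mathbb{Z})$.

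Concretely, I would fix an orphan double octic operator $\mathcal{P}$ from the nine Möbius-and-shift representatives listed in Section \ref{sec:mongr}, and take the subgroup $H\subset Mon(\mathcal{P})$ together with the conjugating matrix $T(\mathcal{P})$ given in the table, so that $H^{\mathcal{B}_s}\subset\mathrm{Sp}(4,\mathbb{Z})$. Lemma \ref{ob:surjective} states that $\pi_7\bigl|_{H^{\mathcal{B}_s}}\colon H^{\mathcal{B}_s}\rightarrow\mathrm{Sp}(4,\mathbb{Z}/7\mathbb{Z})$ is surjective, and since $7\geq 5$, Theorem \ref{th:criterium} applies and yields that $H^{\mathcal{B}_s}$ is Zariski-dense in $\mathrm{Sp}(4,\mathbb{Z})$. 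Because $H^{\mathcal{B}_s}\subset Mon^{\mathcal{B}_s}(\mathcal{P})\subset\mathrm{Sp}(4,\mathbb{Z})$, the Zariski closure of $Mon^{\mathcal{B}_s}(\mathcal{P})$ contains that of $H^{\mathcal{B}_s}$, so $Mon^{\mathcal{B}_s}(\mathcal{P})$ is itself Zariski-dense in $\mathrm{Sp}(4,\mathbb{Z})$. Since density is preserved under conjugation in $\mathrm{GL}(4,\mathbb{Q})$, the same conclusion holds for every integral symplectic realization of $Mon(\mathcal{P})$.

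Finally, I would note that the discussion in Section \ref{sec:mongr} reduces the $18$ orphan operators to the nine boldface representatives, since Möbius reparametrizations and exponent shifts leave the monodromy image unchanged (up to an overall scalar, which is irrelevant for the Zariski closure inside the symplectic group). Thus the conclusion extends to all $18$ operators at once. The only potential obstacle here is purely computational, and it has already been dispatched in the earlier sections: solving the linear system \eqref{eq:system} to get the invariant form $\mathrm{S}$, exhibiting an \emph{integral} conjugator $\mathrm{T}$ satisfying \eqref{eq:form} (nontrivial in general, but worked out case-by-case in the preceding table), and the finite check in $\mathrm{Sp}(4,\mathbb{Z}/7\mathbb{Z})$ that generates the group. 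With those tasks already completed, Theorem \ref{th:dense} follows immediately.
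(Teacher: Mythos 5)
Your proposal matches the paper's argument exactly: the paper deduces Theorem \ref{th:dense} immediately from Theorem \ref{th:criterium} and Lemma \ref{ob:surjective}, using the integral symplectic realizations of the subgroups $H\subset Mon(\mathcal{P})$ and the mod-$7$ surjectivity check, with density passing from $H$ to the full monodromy group. Your additional remarks on conjugation-invariance of density and the reduction to the nine representatives are consistent with the paper's setup and require no further input.
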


Theorem \ref{th:dense} shows that the question of arithmecity for double octic orphan operators is well-posed. We now prove that at least one of the associated monodromy groups is of finite index. Consider the double octic orphan operator $\textbf{250}$:

\bigskip

$\mathcal{P}_\textbf{250}$\ :=\ \(\Theta\, \left( \Theta-1 \right)  \left( \Theta -\frac12\right) ^{2}
+\frac18\,t\Theta\, \left( 44\,{\Theta}^{3}-96\,{\Theta}^{2}+65\,\Theta-12 \right) 
+{t}^{2} \left( \frac{19}2\,{\Theta}^{4}-23\,{\Theta}^{3}+{\frac {131}{8}}\,{\Theta}^{2}-{\frac {47}{8}}\,\Theta-\frac14 \right) \)

\ \ \ \ \ \ \ \ \ \ \(+{t}^{3} \left( \frac52\,{\Theta}^{4}-20\,{\Theta}^{3}-{\frac
	{23}{4}}\,\Theta-{\frac {17}{32}} \right)
-\frac1{32}\,{t}^{4} \left( 68\,{\Theta}^{2}+100\,\Theta+53 \right)  \left( 2\,\Theta+1 \right) ^{2}\)

\ \ \ \ \ \ \ \ \ \ \ \ \(-\frac14\,{t}^{5} \left( 8\,{\Theta}^{2}+14\,\Theta+9 \right)  \left( 2\,\Theta+1 \right) ^{2}
-\frac18\,{t}^{6} \left( 2\,\Theta+3 \right) ^{2} \left( 2\,\Theta+1 \right) ^{2}
\)

\bigskip

\noindent With respect to the symplectic basis $\mathcal{B}_s$ the monodromy group $M:=Mon^{\mathcal{B}_s}(\mathcal{P}_\textbf{250})$ is a subgroup of $\mathrm{Sp}(4,\mathbb{Z})$. It is Zariski-dense by Theorem \ref{th:dense}. It is generated by matrices of local monodromies around four finite singularities:

$$
\begin{pmatrix}
	1&0&0&0\\
	0&1&0&8\\
	0&0&1&0\\
	0&0&0&1\\
\end{pmatrix},\quad
\begin{pmatrix}
	-1&0&0&0\\
	0&1&0&4\\
	-2&0&-1&0\\
	0&-1&0&-3\\
\end{pmatrix},\quad
\begin{pmatrix}
	1&-2&-8&12\\
	-6&-5&-12&0\\
	2&3&9&-6\\
	1&2&6&-5\\
\end{pmatrix},\quad
\begin{pmatrix}
	5&4&4&8\\
	8&9&8&16\\
	-4&-4&-3&-8\\
	-4&-4&-4&-7\\
\end{pmatrix}.
$$

\begin{theorem}\label{th:arithmetic}
The monodromy group $Mon(\mathcal{P}_{\textbf{250}})$ is arithmetic and its index is $23592960=2^{19}\cdot3^2\cdot5$.
\end{theorem}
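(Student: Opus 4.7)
The plan is to leverage the explicit symplectic realization of $Mon(\mathcal{P}_{\textbf{250}}) \subset \mathrm{Sp}(4,\mathbb{Z})$ furnished by the previous section, combined with the classical finite presentation of $\mathrm{Sp}(4,\mathbb{Z})$, and then run a coset enumeration. Since Theorem \ref{th:dense} already establishes Zariski-density, finiteness of the index will by the Remark following the definition of arithmeticity imply arithmeticity, so the work reduces to producing a finite complete coset table and reading off its size.

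First, I would fix the four symplectic matrices
$M_1^{\mathcal{B}_s}, M_\infty^{\mathcal{B}_s}, M_{-1}^{\mathcal{B}_s}, M_{-2}^{\mathcal{B}_s}$
listed just before the theorem as generators of $G := Mon^{\mathcal{B}_s}(\mathcal{P}_{\textbf{250}})$. Using Behr's finite presentation of $\mathrm{Sp}(4,\mathbb{Z})$ by generators $\{g_1,\ldots,g_r\}$ and relators $\{R_1,\ldots,R_s\}$, I would express each of the four generators of $G$ as a word $w_i(g_1,\ldots,g_r)$. The most robust way to find such words is a breadth-first search through the Cayley graph of $\mathrm{Sp}(4,\mathbb{Z})$ with respect to the Behr generators, stopping once each of the four matrices is reached; since the entries of the generators are small integers, the word lengths should be short enough that this completes quickly.

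Having written $G = \langle w_1, w_2, w_3, w_4 \rangle$ as a subgroup of the finitely presented group $\langle g_1,\ldots,g_r \mid R_1,\ldots,R_s\rangle$, I would then feed this data into the Todd--Coxeter coset enumeration procedure as implemented in GAP (or Magma). If the procedure terminates, it produces a complete coset table whose number of rows is exactly the index $[\mathrm{Sp}(4,\mathbb{Z}) : G]$. The goal is that this number equals $23592960 = 2^{19}\cdot 3^2\cdot 5$; once verified, finite index combined with Theorem \ref{th:dense} and the Remark in Section \ref{sec:MUMor} gives arithmeticity.

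The main obstacle is purely computational: Todd--Coxeter enumeration is a semidecision procedure, and for an index of roughly $2.4\times 10^7$ the intermediate coset tables can balloon far beyond the final size, exhausting memory or running for impractical durations. I would mitigate this in two ways: (i) choose carefully a small generating set for $G$ with short words in the Behr generators, so that each relator only induces local deductions; and (ii) employ a Felsch-style or HLT+lookahead variant of Todd--Coxeter with generous workspace, tuning coincidence handling to keep the live coset count close to the final index. If direct enumeration proves infeasible, a fallback is to first locate a finite-index normal subgroup $N \trianglelefteq \mathrm{Sp}(4,\mathbb{Z})$ (for instance a principal congruence subgroup $\Gamma(N)$ for small $N$) contained in $G$, verify $N \subset G$ by checking finitely many generators, and conclude $[\mathrm{Sp}(4,\mathbb{Z}):G] \le [\mathrm{Sp}(4,\mathbb{Z}):N] < \infty$; the exact index $23592960$ can then be extracted by enumerating cosets within the finite quotient $\mathrm{Sp}(4,\mathbb{Z})/N$, which is a finite-group computation.
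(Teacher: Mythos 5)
Your proposal is essentially identical to the paper's argument: the paper also takes the explicit symplectic realization of $Mon^{\mathcal{B}_s}(\mathcal{P}_{\textbf{250}})$ (noting that for operator \textbf{250} the subgroup $H$ equals the whole monodromy group), expresses the four symplectic generators as words in Behr's generators of $\mathrm{Sp}(4,\mathbb{Z})$ by a brute-force search, and then runs Todd--Coxeter coset enumeration in GAP/Magma to obtain the index $23592960$. The fallback via a congruence subgroup is an extra idea not used in the paper, but the main line of attack matches.
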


\begin{proof}
The symplectic group $\mathrm{Sp}(4,\mathbb{Z})$ is finitely presented with known presentation (see \cite{Behr}). Using a brute force algorithm we found words in the chosen generators of $\mathrm{Sp}(4,\mathbb{Z})$ which give the matrices above. We then used the Computer Algebra System system Magma \cite{Magma} to apply the Todd-Coxeter algorithm for the subgroup $M\subset\mathrm{Sp}(4,\mathbb{Z})$. The algorithm completed coset enumeration, proving arithmecity.
\end{proof}

For other double octic orphan operators this method of computing the index of their monodromy groups did not produce any results. Computations modulo powers of small primes suggest that both arithmetic and thin examples can be found among them.


\begin{thebibliography}{99}
 	
 	\bibitem{Behr}H. Behr, \emph{Eine endliche Präsentation der symplektischen Gruppe} $\mathrm{Sp}(4,\mathbb{Z})$. Math Z 141, 47–56 (1975).
 	
 	\bibitem{Borel}A.~Borel, \emph{Density and maximality of arithmetic subgroups.} J. Reine Angew. Math 22, 78-89 (1966).
 	
 	\bibitem{Brav-Thomas}C.~Brav, H.~Thomas, \emph{Thin monodromy in Sp(4)}. Compos. Math., 150(3), 333-343 (2014).
 	
 	\bibitem{Chmiel1}T.~Chmiel, \emph{Picard-Fuchs operators of one-dimensional families of Calabi-Yau threefolds and their monodromy representations}, Master Thesis, Uniwersytet Jagielloński, Kraków (2020).
 	
 	\bibitem{Chmiel2} T. Chmiel, \emph{Coefficients of the monodromy matrices of one-parameter families of double octic Calabi-Yau threefolds at a half-conifold point}, J. Number Theory 255, 8-36 (2024).
 	
 	\bibitem{Cynk-Kocel} S.~Cynk, B.~Kocel-Cynk, \emph{Classification of double octic Calabi-Yau threefolds.} Commun. Contemp. Math. 22, no. 1, 1850082, pp. 38 (2020).
 	
 	\bibitem{Cynk-van Straten}S. Cynk, D. van Straten, \emph{Picard-Fuchs operators for octic arrangments I (The case of orphans).} Commun. Number Theory Vol. 13, No. 1., 1-52 (2019).
 	
 	\bibitem{Doran-Morgan} C.F.~Doran and J.~Morgan, \emph{Mirror symmetry and integral variations of Hodge structure underlying one parameter families of Calabi-Yau threefolds.} in: Mirror symmetry V, AMS/IP Studies in Advanced Mathematics 38, 517-537 (2006).
 	
	\bibitem{GvG}A.~Garbagnati, B. van Geemen, \emph{The Picard–Fuchs equation of a family of Calabi–Yau threefolds without maximal unipotent monodromy.} Int. Math. Res. Not., no. 16, 3134-3143 (2010).
 	
 	\bibitem{Hofmann}J.~Hofmann, \emph{Monodromy calculations for some differential equations.} Thesis, Johannes Gutenberg-Universität Mainz, Mainz (2013).
 	
 	\bibitem{Hofmann-van Straten}J.~Hofmann, D.~van~Straten, \emph{Some monodromy groups of finite index in Sp4(Z).} J. Aust. Math. Soc. 99, no. 1, 48-62 (2015).
 	
 	\bibitem{Landman}A.~Landman, \emph{On the Picard-Lefschetz transformations.} Thesis, University of California, Berkeley, Calif. (1966).
 	
 	\bibitem{Magma}W.~Bosma, J.~Cannon, C.~Playoust, \emph{The Magma algebra system. I. The user language}, J. Symbolic Comput., 24, 235-265 (1997).
 	
 	\bibitem{Rivin}I. Rivin, \emph{Zariski density and genericity.} Int. Math. Res. Not., no. 19, 3649–3657 (2010).
 	
 	\bibitem{Rohde}J.-C. Rohde, \emph{Maximal automorphisms of Calabi–Yau manifolds versus maximally unipotent monodromy.} Manuscripta Math. 131, no. 3-4, 459-474 (2010).
 	
 	\bibitem{Singh}S.~Singh, \emph{Arithmeticity of Four Hypergeometric Monodromy Groups Associated to Calabi–Yau Threefolds}. Int. Math. Res. Not., no. 18, 8874-8889 (2015).
 	
 	\bibitem{Singh-Venkataramana}S.~Singh, T.N.~Venkataramana, \emph{Arithmeticity of certain symplectic hypergeometric groups}. Duke Math. J. 163 (3) 591-617 (2014).
 	
 \end{thebibliography}
\end{document}